\newcommand{\drept}[1]{\ensuremath{\mathrm{#1}}}
\newcommand{\rond}[1]{\ensuremath{\mathscr{#1}}}
\newcommand{\eqdef}{\ensuremath{\overset{\text{def}}{=}}}
\newcommand{\hm}[3]{\ensuremath{\mathrm{Hom}_{#1}\left(#2, #3\right)}}
\newcommand{\iso}[3]{\ensuremath{\mathrm{Iso}_{#1}\left(#2, #3\right)}}
\newcommand{\id}{\ensuremath{\mathbbm{1}}}
\newcommand{\gr}{\ensuremath{\mathbf{Gr}}}
\newcommand{\ens}{\ensuremath{\mathbf{Ens}}}
\newcommand{\Om}{\ensuremath{\Omega}}
\newcommand{\inc}[2]{\ensuremath{\drept{i}_{#2}^{#1}}}
\newcommand{\mpt}[1]{\ensuremath{\mathscr{P}(#1)}}
\newcommand{\im}[1]{\ensuremath{\mathrm{Im} \hspace*{1pt} #1}}
\newcommand{\centr}[2]{\ensuremath{\drept{C}_{#2}\left(#1\right)}}
\newcommand{\nil}{\ensuremath{\mathbb{0}}}
\newcommand{\spl}[1]{\ensuremath{\overline{#1}}}
\newcommand{\nat}{\ensuremath{\mathbbm{N}}}
\newcommand{\sbg}{\ensuremath{\leqslant}}
\newcommand{\sbgop}{\ensuremath{\leqslant_{\Om}}}
\newcommand{\sbgnop}{\ensuremath{\trianglelefteq_{\Om}}}
\newcommand{\sbgn}{\ensuremath{\trianglelefteq}}
\newcommand{\gensbgop}[1]{\ensuremath{\left\langle #1 \right\rangle_{\Om}}}
\newcommand{\gensbgnop}[1]{\ensuremath{\left\langle \left\langle #1 \right\rangle \right\rangle_{\Om}}}
\newcommand{\inv}[1]{\ensuremath{#1^{-1}}}
\newcommand{\omact}[1]{\ensuremath{\operatorname{\Om-\mathit{#1}}}}
\newcommand{\Sbgnop}[1]{\ensuremath{\rond{S}_{\operatorname{\Om-\drept{Gr}}}^{\drept{n}}\left(#1\right)}}
\newcommand{\Sbgop}[1]{\ensuremath{\mathscr{S}_{\operatorname{\Om-\mathrm{Gr}}}\left(#1\right)}}
\newcommand{\eqv}{\ensuremath{\Leftrightarrow}}
\newcommand{\sdri}{\ensuremath{\mathbf{SDR}_{\drept{i}}}}
\newcommand{\sdrs}{\ensuremath{\mathbf{SDR}_{\drept{s}}}}
\newcommand{\sdrb}{\ensuremath{\mathbf{SDR}_{\drept{b}}}}
\newcommand{\smd}{\ensuremath{\leqslant_{\mathrm{d}}}}
\newcommand{\krn}[1]{\ensuremath{\drept{Ker} \hspace*{1pt} #1}}
\newcommand{\sz}[1]{\ensuremath{\rond{S}_{\drept{s}} \left( #1 \right)}}
\newcommand{\impl}{\ensuremath{\Rightarrow}}
\newcommand{\soc}[1]{\ensuremath{\drept{Soc}\left(#1 \right)}}
\newcommand{\cls}[1]{\ensuremath{\drept{cl} \left( #1 \right)}}
\newcommand{\Sup}[1]{\ensuremath{\rond{Supp} \left( #1 \right)}}
\newcommand{\hmn}[2]{\ensuremath{\mathrm{Hom}^{\mathrm{n}}_{\operatorname{\Omega-{\gr}}} \left(#1, #2 \right)}}
\newcommand{\brac}[1]{\ensuremath{\left(#1\right)}}
\newcommand{\bracc}[1]{\ensuremath{\left\{#1\right\}}}
\newcommand{\card}[1]{\ensuremath{\left|#1 \right|}}
\DeclareMathOperator*{\urds}{\underline{\odot}}
\DeclareMathOperator*{\rprod}{\underline{\times}}
\DeclareMathOperator*{\rds}{\odot}
\DeclareMathOperator*{\mult}{\bullet}
\theoremstyle{definition}
\newtheorem*{theorem*}{Theorem}
\newtheorem{proposition}{Proposition}
\newtheorem*{lemma*}{Lemma}
\title{\Huge{\textbf{The generalisation of some invariants of modules to groups with operators}}}
\author{by\\ Sebastian Cristian Lesnic}
\date{}
\begin{document}

\maketitle

\begin{abstract}
In this article we present a straightforward generalisation to groups with operators of a number of invariants well-known in the theory of modules, having a special bearing on phenomena of semisimplicity. We examine the behaviour of the generalised invariants in relation to the fundamental construction of restricted direct sums and in particular in the context of semisimple groups with operators. We also consider a particular type of morphisms between groups with operators, which naturally preserves the generalised invariants in question and thus shows itself to be an adequate notion for their study in the more general frame considered.      
\end{abstract}

\section{Introduction}

The results presented in this article are excerpted from the PhD thesis of the author, elaborated with the goal of developing a general theory of semisimplicity patterned on the classical one encountered in the category of modules yet applied to the larger category of (not necessarily abelian) groups with operators. Our main references for groups with operators are~\cite{alg1}, ch. I, pg. 29 and~\cite{grth}, pg. 28. We adhere to the set theory formalism expounded on by Bourbaki in~\cite{ens}. We systematise below our preliminary definitions together with our conventions of notation:
\begin{enumerate}

\item For arbitrary set $A$ we write $\Delta_A \eqdef \bracc{\brac{x, x}}_{x \in A}$ for its diagonal and $\mpt{A}$ for its powerset. If $B \subseteq A$ we denote the corresponding inclusion map by $\inc{B}{A}$. Given map $f \colon A \to B$ together with subsets $M \subseteq A$, $N \subseteq B$ such that $f(M) \subseteq N$ we write ${}_{N|}f_{|M}$ for the corresponding restriction of $f$, that is the \textit{unique} map $g \colon M \to N$ such that $\inc{N}{B} \circ g=f \circ \inc{M}{A}$. 

We shall denote the category of sets by $\ens$. Given arbitrary category $\rond{C}$ and two of its objects $X$ and $Y$, we write $\hm{\rond{C}}{X}{Y}$ respectively $\iso{\rond{C}}{X}{Y}$ for the set of all morphisms respectively of all isomorphisms in category $\rond{C}$ from $X$ to $Y$. We express the fact that $X$ and $Y$ are isomorphic in $\rond{C}$ by the formal predicate $X \approx Y \hspace*{3pt} \brac{\rond{C}}$, which is equivalent to the claim $\iso{\rond{C}}{X}{Y} \neq \varnothing$.

We consider a family $B$ of sets indexed by $I$, a family $f \in \displaystyle\prod_{i \in I}\hm{\ens}{A}{B_i}$ of maps and we write $p_i$ for the canonical projection of index $i \in I$ defined on the cartesian product of family $B$. There exists a unique map $g \in \hm{\ens}{A}{\displaystyle\prod_{i \in I}B_i}$ such that $p_i \circ g=f_i$ for every $i \in I$, map which we denote by $\displaystyle\rprod_{i \in I}f_i \eqdef g$ and refer to as the direct product of family $f$ in \textit{restricted} sense.

\item Given an arbitrary set $\Om$ we write $\omact{\gr}$ for the category of $\Om$-groups. In an $\Om$-group $G$, we write ($H \sbgnop G$) $H \sbgop G$ to express the fact that $H$ is a (normal) $\Om$-subgroup of $G$. The corresponding collections of all (normal) $\Om$-groups of $G$ will be denoted by ($\Sbgnop{G}$) $\Sbgop{G}$. For arbitrary subset $X \subseteq G$, the $\Om$-subgroup respectively normal $\Om$-subgroup generated by $X$ in $G$ will be denoted by $\gensbgop{X}$ respectively $\gensbgnop{X}$, and the centraliser of $X$ in $G$ by $\centr{X}{G}$. Given subgroups $K, H \leqslant G$, we use the traditional notation $[H, K]$ for their commutator subgroup.

\item If $H$ is a family of $\Om$-groups indexed by $I$, its \text{restricted direct sum} (cf.~\cite{alg1}, ch. I, pg. 45) will be denoted by $\displaystyle\rds_{i \in I}H_i$. In particular we write $F^{(I)} \eqdef \displaystyle\rds_{i \in I}F$ for arbitrary $\Om$-group $F$. If furthermore the family $H \in \Sbgop{G}^I$ consists of $\Om$-subgroups of $G$ such that $H_j \subseteq \centr{H_i}{G}$ for any pair of \textit{distinct} indices $i, j \in I$ we say that $H$ satisfies (\textbf{CC}) (short for \textit{commutativity conditions}). Given a family of morphisms $f \in \displaystyle\prod_{i \in I}\hm{\omact{\gr}}{F_i}{G}$ such that the associated family of images $\brac{\im{f_i}}_{i \in I} \in \Sbgop{G}^I$ satisfies (\textbf{CC}), there exists a unique morphism $g \in \hm{\omact{\gr}}{\displaystyle\rds_{i \in I}F_i}{G}$ such that $g \circ \iota_i=f_i$ for each $i \in I$, where $\iota_i$ denotes the canonical injection of summand $F_i$ into the restricted direct sum of family $F$. We denote this morphism by $\displaystyle\urds_{i \in I}f_i \eqdef g$ and we introduce the special notation $\theta^H_G \eqdef \displaystyle\urds_{i \in I}\inc{H_i}{G}$. Given a family of morphisms $g \in \displaystyle\prod_{i \in I}\hm{\omact{\gr}}{F_i}{F'_i}$, there exists a unique morphism $h \in \hm{\omact{\gr}}{\displaystyle\rds_{i \in I}F_i}{\displaystyle\rds_{i \in I}F'_i}$ such that $h \circ \iota_i=\iota'_i \circ g_i$, where $\iota_i$ respectively $\iota'_i$ denote the canonical injections of index $i \in I$ into the restricted direct sums of families $F$ respectively $F'$. We denote this morphism by $\displaystyle\rds_{i \in I}g_i \eqdef h$. It is immediate that $\displaystyle\rds_{i \in I}\inc{H_i}{G_i}=\inc{\rds_{i \in I}H_i}{\rds_{i \in I}G_i}$, for any family $G$ of $\Om$-groups and any family $H \in \displaystyle\prod_{i \in I}\Sbgop{G_i}$ of $\Om$-subgroups.

\item In the case of a family $H \in \Sbgop{G}^I$ of $\Om$-subgroups satisfying (\textbf{CC}), we employ the related notations $\sdri\brac{G, H}$, $\sdrs\brac{G, H}$ respectively $\sdrb(G, H)$ as abbreviations of the formal predicates expressing the injectivity, surjectivity respectively bijectivity of $\theta^H_G$ (the abbreviation SDR is the acronym of the French syntagm “somme directe restreinte”, whereas the subscripts are the initials of the adjectives \textbf{i}njective, \textbf{s}urjective respectively \textbf{b}ijective). Relation $\sdri\brac{G, H}$ is equivalent to the mutual independence condition:
\begin{align*}
\brac{\forall i}\brac{i \in I \impl H_i \cap \gensbgop{\bigcup_{j \in I \setminus \bracc{i}}H_j}=\bracc{1_G}}, \label{mi} \tag{MI}
\end{align*}
whereas relation $\sdrs\brac{G, H}$ is equivalent to $G=\gensbgop{\displaystyle\bigcup_{i \in I}H_i}$. By definition it is obvious that $\sdrb\brac{G, H} \eqv \sdri\brac{G, H} \wedge \sdrs\brac{G, H}$. Let us note that given an additional family $K \in \Sbgop{G}^I$ such that $K_i \subseteq H_i$ for every $i \in I$, $K$ also automatically satisfies (\textbf{CC}). If the relations $\sdrs\brac{G, K}$ and $\sdri\brac{G, H}$ are additionally satisfied, it can be shown via condition (MI) that $K=H$ (property (SIE)).

\item Given $\Om$-subgroup $H \sbgop G$, we say $K \sbgop G$ is a \textbf{supplementary} of $H$ in $G$ if the family $F \eqdef \bracc{\brac{1, H}, \brac{2, K}}$ satisfies (\textbf{CC}) as well as the relation $\sdrb\brac{G, F}$. This is equivalent to the conjunction of relations $H, K \sbgnop G$, $H \cap K=\bracc{1_G}$, $HK=G$. We say the $\Om$-subgroup $F \sbgop G$ is a \textbf{direct summand} if it admits a supplementary, and we express this symbolically by $F \smd G$. It follows easily that if $K \sbgnop F \smd G$ then $K \sbgnop G$ (property (NS)). 

\item For arbitrary $\Om$-group $G$ let $\sz{G}$ denote the collection of all \textit{simple} normal $\Om$-subgroups of $G$. In accordance with~\cite{grth}, propositions (3.3.11-14) we recall the equivalence of the following assertions regarding given $\Om$-group $G$:
\begin{enumerate}[label=(\roman*)]
    \item $G$ is isomorphic to the restricted direct sum of a family of \textit{simple} $\Om$-groups
    
    \item $G=\gensbgop{\displaystyle\bigcup\rond{M}}$ for a certain subset $\rond{M} \subseteq \sz{G}$
    
    \item every normal $\Om$-subgroup $F \sbgnop G$ is a direct summand $F \smd G$
\end{enumerate}
and we say that $G$ is \textbf{semisimple} when it satisfies (any of) the above assertions. Given a fixed \textit{simple} $\Om$-group we say $G$ is \textbf{isotypical of type} $S$ if $G \approx {S}^{\brac{I}} \brac{\omact{\gr}}$ for a certain set $I$. 
\end{enumerate}
We briefly mention a slight generalisation of proposition (3.3.12), proved with a similar argument. Given arbitrary sets $A$ and $B$, we write $A \sqcup B \eqdef \brac{\bracc{1} \times A} \cup \brac{\bracc{2} \times B}$ for their disjoint union. We introduce the abbreviation $\spl{M} \eqdef \bracc{\varnothing} \sqcup M$ for arbitrary set $M$. If $x$ is a family of objects indexed by $I$ and $J \subseteq I$ a subset, we write $x_{|J} \eqdef \brac{x_i}_{i \in J}$ for the \textit{subfamily} of $x$ obtained by restricting indices to $J$.
\begin{proposition}
Let $G$ be an arbitrary $\Om$-group, $F \sbgnop G$ a normal $\Om$-subgroup and $H \in \sz{G}^I$ a family of simple normal $\Om$-subgroups indexed by $I$ and such that $G=\gensbgop{F \cup \displaystyle\bigcup_{i \in I}H_i}$. We introduce the “extended” family $H' \in \Sbgnop{G}^{\spl{I}}$ given by $H'_{1\varnothing} \eqdef F$ and $H'_{2i} \eqdef H_i$ for each $i \in I$. There exists a subset $J \subseteq I$ such that the subfamily $H'_{|\spl{J}}$ satisfies (\textbf{CC}) as well as relation $\sdrb\brac{G, H'_{|\spl{J}}}$.
\end{proposition}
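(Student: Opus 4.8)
The plan is to obtain the desired $J$ by a maximality argument modelled on the module-theoretic proof that a sum of simple submodules contains a subfamily summing directly to the whole. Consider the set $\mathcal{J}$ of all $J \subseteq I$ for which the subfamily $H'_{|\spl{J}}$ satisfies the mutual independence condition (MI), namely $H'_a \cap \gensbgop{\bigcup_{b \in \spl{J} \setminus \bracc{a}} H'_b}=\bracc{1_G}$ for every $a \in \spl{J}$. This condition is meaningful irrespective of (\textbf{CC}); and since every member of $H'$ is normal in $G$, (MI) forces $H'_a \cap H'_b=\bracc{1_G}$ for distinct indices, whence $[H'_a, H'_b]=\bracc{1_G}$. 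Thus $H'_{|\spl{J}}$ automatically satisfies (\textbf{CC}), so the predicate $\sdri\brac{G, H'_{|\spl{J}}}$ is legitimate and holds for every $J \in \mathcal{J}$. The collection $\mathcal{J}$ is nonempty, since $J=\varnothing$ lies in it: the sole index of $\spl{\varnothing}$ is $1\varnothing$, so (MI) is vacuous. The aim is to show that a maximal element of $\mathcal{J}$, once it is shown to exist, automatically yields surjectivity, so that $\sdrb$ follows.

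Ordering $\mathcal{J}$ by inclusion, I would verify Zorn's hypothesis. For a chain $\brac{J_\lambda}_\lambda$ set $J \eqdef \bigcup_\lambda J_\lambda$; to check (MI) for $H'_{|\spl{J}}$, fix $a \in \spl{J}$ and $x \in H'_a \cap \gensbgop{\bigcup_{b \in \spl{J} \setminus \bracc{a}} H'_b}$. As an element of the generated $\Om$-subgroup, $x$ is a finite product of elements each lying in some $H'_b$, so only finitely many indices $b$ intervene; by directedness of the chain, $a$ together with all these finitely many indices lie in a common $\spl{J_\lambda}$, whence $x \in H'_a \cap \gensbgop{\bigcup_{b \in \spl{J_\lambda} \setminus \bracc{a}} H'_b}=\bracc{1_G}$ by (MI) for $J_\lambda$. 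Hence $J \in \mathcal{J}$ bounds the chain and Zorn furnishes a maximal $J \in \mathcal{J}$.

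It then remains to establish $\sdrs\brac{G, H'_{|\spl{J}}}$, i.e. $G=N$, where $N \eqdef \gensbgop{F \cup \bigcup_{j \in J} H_j}=\gensbgop{\bigcup_{a \in \spl{J}} H'_a} \sbgnop G$. Suppose $N \neq G$. Since $G=\gensbgop{F \cup \bigcup_{i \in I} H_i}$ and $F \subseteq N$, some $H_{i_0}$ fails to be contained in $N$; in particular $i_0 \notin J$. Now $H_{i_0} \cap N \sbgnop G$ is a normal $\Om$-subgroup of the simple $\Om$-group $H_{i_0}$, so by simplicity together with $H_{i_0} \not\subseteq N$ it is trivial: $H_{i_0} \cap N=\bracc{1_G}$. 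As $H_{i_0}$ and $N$ are normal with trivial intersection, $[H_{i_0}, N]=\bracc{1_G}$, so $H_{i_0}$ centralises $N$.

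The key step, which I expect to be the main obstacle, is to show $J \cup \bracc{i_0} \in \mathcal{J}$, contradicting maximality. For the new index $2i_0$ the complementary generated subgroup is exactly $N$, and $H_{i_0} \cap N=\bracc{1_G}$. For an old index $a$, writing $M_a \eqdef \gensbgop{\bigcup_{b \in \spl{J} \setminus \bracc{a}} H'_b} \subseteq N$, the facts that $H_{i_0}$ centralises $N \supseteq M_a$ and meets it trivially give $\gensbgop{M_a \cup H_{i_0}}=M_a H_{i_0}$ with commuting factors; any $x=m h \in H'_a \cap M_a H_{i_0}$ then satisfies $h=m^{-1}x \in N \cap H_{i_0}=\bracc{1_G}$ (since $m, x \in N$), so $x \in H'_a \cap M_a=\bracc{1_G}$ by the (MI) already known for $J$. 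Thus (MI) persists and $J \cup \bracc{i_0} \in \mathcal{J}$ strictly dominates $J$, a contradiction. Hence $N=G$, and together with $\sdri$ and (\textbf{CC}) this delivers $\sdrb\brac{G, H'_{|\spl{J}}}$. The delicate point is precisely this re-establishment of mutual independence for the previously present indices after adjoining $i_0$, where simplicity is invoked to collapse $H_{i_0} \cap N$ and normality is invoked to split the enlarged generated subgroup as a commuting product.
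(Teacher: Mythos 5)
Your proof is correct, and it is essentially the argument the paper has in mind: the paper gives no explicit proof but states that the proposition is ``proved with a similar argument'' to Robinson's proposition (3.3.12), namely precisely your Zorn's-lemma maximality argument, with simplicity collapsing $H_{i_0}\cap N$ and normality yielding the commutation $[H_{i_0},N]=\bracc{1_G}$ needed to adjoin the new index. Your additional care in deriving (\textbf{CC}) from (MI) plus normality, and in re-verifying (MI) at the old indices after adjoining $i_0$, fills in exactly the details the paper leaves to the reader.
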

As an immediate consequence of this proposition we infer that any normal $\Om$-subgroup and implicitly any quotient of a semisimple (isotypical) $\Om$-group is itself semisimple (isotypical of the same type).  

\section{The generalised invariants and their properties}

\subsection{Definitions}

We give here brief definitions of some of the invariants of groups with operators relevant in the context of restricted direct sums and semisimplicity. While they do constitute straightforward generalisations of the classical invariants introduced for modules, the lack of commutativity of the algebraic structures at hand requires special consideration. Given arbitrary $\Om$-group $G$ we define its \textbf{socle} as:
\[\soc{G} \eqdef \gensbgop{\bigcup \sz{G}}.\]
Since all the simple normal $\Om$-subgroups of $G$ are by construction simple normal $\Om$-subgroups of the socle, it follows that $\soc{G}$ is semisimple since it satisfies assertion (ii) above. By the same token, $G$ is semisimple if and only if $G=\soc{G}$. For fixed simple $\Om$-group $S$ we define \textbf{the isotypical $S$-component} of $G$ as:
\[G_S \eqdef \gensbgop{\bigcup_{\substack{H \in \sz{G}\\ H \approx S \hspace*{1pt} \brac{\omact{\gr}}}}H}.\]
An application of proposition (1) shows in particular that $G_S$ is indeed isotypical of type $S$, justifying the terminology. It is clear that in general $G_S \sbgop \soc{G}$ for any $\Om$-group $G$ and any simple $\Om$-group $S$. Following a syntactic procedure described in~\cite{ens}, ch. II, pg. 47 we define the isomorphism class of $G$, symbolised as $\cls{G}$. This syntactic device has the property that $\cls{\cls{F}}=\cls{F}$ for any $\Om$-group $F$. We proceed to introduce \textbf{the support} of $G$:
\[\Sup{G} \eqdef \bracc{\cls{H}}_{H \in \sz{G}}.\]
If $F \smd G$ is a direct summand we automatically have $\Sup{F} \subseteq \Sup{G}$, cf. property (NS) (\S{1}, paragraph 5). If $G$ is isotypical of type $S$ we easily gather from proposition (1) that $\Sup{G} \subseteq \bracc{\cls{S}}$, with equality occurring if and only if $G \neq \bracc{1_G}$ is nontrivial. It is also clear that $G_S \neq \bracc{1_G} \eqv \cls{S} \in \Sup{G}$ in general. We also note that $G_S=G_{\cls{S}}$, for any simple $\Om$-group $S$.

We say morphism $f \in \hm{\omact{\gr}}{G}{G'}$ is \textbf{normal} if for every normal $\Om$-subgroup $H \sbgnop G$ the direct image $f(H) \sbgnop G'$ is normal in $G'$. We write $\hmn{G}{G'}$ for \textbf{the set of all normal morphisms} from $G$ to $G'$. Normality is preserved under composition of morphisms. Surjective morphisms are automatically normal and so are inclusions of direct summands in their ambient $\Om$-groups, cf. (NS). Thus, more generally any morphism whose image is a direct summand of the target will be normal. 

Given a family $G$ of $\Om$-groups indexed by $I$ we write $\iota_i$ for the canonical injection of index $i \in I$ into the restricted direct sum of family $G$ and we introduce the family of $\Om$-subgroups $G' \eqdef \brac{\gensbgop{\displaystyle\bigcup_{i \in J}\im{\iota_i}}}_{J \in \mpt{I}}$. It is immediate that $G'_J$ and $G'_{I \setminus J}$ are supplementary, by which $G'_J \smd \displaystyle\rds_{i \in I}G_i$. In particular, $G'_{\bracc{i}}=\im{\iota_i}$ is a direct summand and therefore $\iota_i$ is a normal morphism for every $i \in I$.\\

\begin{lemma*} The $\Om$-subgroup generated by a union of \textit{normal} $\Om$-subgroups in a fixed ambient $\Om$-group is itself normal.
\end{lemma*}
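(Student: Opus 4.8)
The plan is to show that if $(N_k)_{k \in K}$ is a family of normal $\Om$-subgroups of a fixed $\Om$-group $G$, then $\gensbgop{\bigcup_{k \in K} N_k} \sbgnop G$. I would begin by recalling the standard description of the $\Om$-subgroup generated by a subset $X \subseteq G$: it is the set of all finite products $x_1^{\varepsilon_1} \cdots x_n^{\varepsilon_n}$ where each $x_j$ lies in $X \cup \omega X$ for some $\omega \in \Om$ (equivalently, the smallest $\Om$-subgroup containing $X$, obtained by closing $X$ under the group operations, inversion, and the action of $\Om$). Here the relevant set is $X \eqdef \bigcup_{k \in K} N_k$. Since each $N_k$ is already an $\Om$-subgroup, it is closed under products, inversion, and the $\Om$-action, so the generated $\Om$-subgroup $\gensbgop{X}$ consists precisely of all finite products of elements drawn from the various $N_k$.

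To verify normality, I would use the characterisation that an $\Om$-subgroup $H \sbgop G$ is normal exactly when it is stable under conjugation by every element of $G$, that is $gHg^{-1} \subseteq H$ for all $g \in G$. (The stability under the $\Om$-action is automatic since $H$ is already an $\Om$-subgroup; normality in the $\Om$-group sense reduces to ordinary conjugation-invariance.) So I would take an arbitrary element $h \in \gensbgop{X}$, write it as a finite product $h = a_1 a_2 \cdots a_n$ with each $a_j \in N_{k_j}$, and conjugate by an arbitrary $g \in G$. The key algebraic move is the identity $g\,a_1 a_2 \cdots a_n\, g^{-1} = (g a_1 g^{-1})(g a_2 g^{-1}) \cdots (g a_n g^{-1})$, inserting $g^{-1} g$ between consecutive factors. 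Since each $N_{k_j} \sbgnop G$ is normal, we have $g a_j g^{-1} \in N_{k_j} \subseteq X$, so the conjugate $g h g^{-1}$ is again a finite product of elements of $X$, hence lies in $\gensbgop{X}$.

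The one point requiring a little care is the passage from the closure-theoretic definition of $\gensbgop{X}$ to the explicit ``finite products'' description, and ensuring the $\Om$-action is handled cleanly. Rather than manipulate word representations directly, the cleaner route I would actually take is to invoke the universal/minimality property: I would show that the set $gHg^{-1}$, for $H \eqdef \gensbgop{X}$ and fixed $g \in G$, is itself an $\Om$-subgroup containing each $N_k$ (because $g N_k g^{-1} = N_k \subseteq H$ by normality of $N_k$), whence by minimality $H \subseteq gHg^{-1}$; applying this with $g^{-1}$ in place of $g$ yields the reverse inclusion and thus $gHg^{-1} = H$. This avoids any explicit induction on word length. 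The single step meriting genuine attention is checking that $g H g^{-1}$ is an $\Om$-subgroup, which amounts to verifying it is stable under the $\Om$-action; this follows because conjugation by $g$ commutes appropriately with the operators, or more directly because $H$ is an $\Om$-subgroup and the operator action respects the group structure.

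I expect the main obstacle to be purely expository rather than mathematical: the argument is short, so the care lies in selecting the minimality-based formulation above (which sidesteps the combinatorics of generic words in $\bigcup_k N_k$) and in making explicit why conjugation-invariance together with the ambient $\Om$-subgroup status of $H$ suffices for $\Om$-normality, given that the $\Om$-action is automatically respected once $H$ is known to be an $\Om$-subgroup. No family-wide commutativity condition (\textbf{CC}) or finiteness of $K$ is needed, since the argument treats an arbitrary single element of the generated subgroup.
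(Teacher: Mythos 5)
Your proposal actually contains two arguments, and the one you say you would ``actually take'' is the one that breaks. The minimality route hinges on the claim that $gHg^{-1}$ is an $\Om$-subgroup whenever $H$ is, and this is false in a general $\Om$-group: operators act as group \emph{endomorphisms}, and endomorphisms do not commute with inner automorphisms. For $h \in H$ one has $\omega\brac{ghg^{-1}} = \omega(g)\,\omega(h)\,\omega(g)^{-1}$, \emph{not} $g\,\omega(h)\,g^{-1}$; to place this element back in $gHg^{-1}$ you would need $H$ to be stable under conjugation by $g^{-1}\omega(g)$, which is precisely the normality statement you are trying to prove --- the argument is circular. The failure is genuine, not merely expository: take $G = \Sigma_3$ with $\Om$ consisting of the single endomorphism ``conjugation by $(1\,2)$''. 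Then $H = \bracc{\mathrm{id}, (1\,2)}$ is an $\Om$-subgroup, but its conjugate by $g = (1\,3)$, namely $\bracc{\mathrm{id}, (2\,3)}$, is not $\Om$-stable. So both justifications you offer for the key step (``conjugation commutes appropriately with the operators'', ``the operator action respects the group structure'') are unavailable.

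The fix is the argument you sketched first and then discarded. Since each $N_k$ is closed under inversion and under the $\Om$-action, the union $X \eqdef \bigcup_{k}N_k$ is too, so $\gensbgop{X}$ is exactly the set of finite products $a_1 \cdots a_n$ with $a_j \in X$; conjugating termwise, $g a_1 \cdots a_n g^{-1} = \brac{ga_1g^{-1}}\cdots\brac{ga_ng^{-1}}$, and normality of each $N_{k_j}$ puts each factor back in $X$. This gives conjugation-stability, and $\Om$-stability of $\gensbgop{X}$ holds by construction, so the lemma follows; the word manipulation you wanted to avoid is precisely what makes the proof non-circular. Note that this is still a different route from the paper's, which avoids elements entirely: there one observes that $\Sbgnop{G}$ is a \emph{commutative} submonoid of the power-set monoid $\brac{\mpt{G}, \cdot}$, writes the generated subgroup as the upward-directed union of the finite products $\displaystyle\mult_{K \in \rond{T}}K$ over finite subfamilies $\rond{T}$, and invokes the fact that $\Sbgnop{G}$ is an \emph{algebraic} closure system, so that a directed union of normal $\Om$-subgroups is again one. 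Your element-wise argument is more elementary; the paper's argument is the one that scales to the closure-system formalism it uses elsewhere.
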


\begin{proof}Indeed, let $\rond{M} \subseteq \Sbgnop{G}$ be arbitrary and consider $H \eqdef \gensbgop{\displaystyle\bigcup\rond{M}}$. For any set $M$ we write $\rond{F}(M)$ for the collection of all \textit{finite} subsets of $M$. We consider the extension of the internal multiplication on $G$ to the multiplicative monoid $\brac{\mpt{G}, \cdot}$, where $XY \eqdef \bracc{xy}_{\substack{x \in X\\y \in Y}}$ for any two subsets $X, Y \subseteq G$. Given a finite index set $I$ ordered by total order $T$ and family $X \in \mpt{G}^I$, we write $\displaystyle\mult_{\substack{i \in I\\T}}X_i$ for the product of family $X$ in order $T$ with respect to the monoid $\mpt{G}$. In the case of a family $X$ of pairwise permutable elements ($X_iX_j=X_jX_i$ for any $i, j \in I$), the product of $X$ is independent of the total order specified on $I$ and we simplify the notation to $\displaystyle\mult_{i \in I}X_i$. With these conventions, since $\Sbgnop{G}$ is a \textit{commutative} submonoid of $\mpt{G}$, it is easy to obtain the description:
\[H=\bigcup_{\rond{T} \in \rond{F}\brac{\rond{M}}} \mult_{K \in \rond{T}}K.\]
If $\rond{K} \subseteq \rond{H} \in \rond{F}\brac{\rond{M}}$ we clearly have $\displaystyle\mult_{K \in \rond{K}}K \subseteq \displaystyle\mult_{K \in \rond{H}}K$, hence the set $\bracc{\displaystyle\mult_{K \in \rond{T}}K}_{\rond{T} \in \rond{F}\brac{\rond{M}}}$ is upward-directed under inclusion. As $\Sbgnop{G}$ forms an \textit{algebraic} closure system on $G$ (cf.~\cite{alguniv}, pg. 24), it follows that $H \sbgnop G$.
\end{proof}
In particular we infer that $\soc{G}, G_S \sbgnop G$ for any simple $\Om$-group $S$.

\subsection{Main properties of the invariants}
 We begin with a preliminary result which is of interest in and of itself: 
 
\begin{proposition} \label{prop13} \hfill \\
\begin{enumerate}
\item Let $I$ be a set (of indices), $S$ a family of \textit{simple} \Om-groups indexed by $I$ and such that $S_i \napprox S_j \hspace*{3pt} (\omact{\gr})$ for any \textit{distinct} indices $i, j \in I$ and $G$ an arbitrary \Om-group. Then the family $H \eqdef \brac{G_{S_i}}_{i \in I}$ satisfies (\textbf{CC})
and relation $\sdri(G, H)$ holds.

\item In particular, for arbitrary \Om-group $G$ the family of isotypical components $K \eqdef (G_S)_{S \in \Sup{G}}$ satisfies (\textbf{CC}) and relation $\sdrb(\soc{G}, K)$ holds \textbf{(the socle decomposes as the restricted direct sum of all the nontrivial isotypical components)}.

\item Given arbitrary \Om-groups $G, G'$ and normal morphism $f \in \hmn{G}{G'}$, we have that $f\brac{\soc{G}} \sbgnop \soc{G'}$ and $f\brac{G_S} \sbgnop G'_S$ for any simple \Om-group $S$ \textbf{(normal morphisms “preserve” socles and isotypical components)}.

\item Given index set $I$, arbitrary family $G$ of \Om-groups indexed by $I$ and arbitrary simple \Om-group $S$, the following relations hold:
\begin{align*}
\soc{\rds_{i \in I}G_i}&=\rds_{i \in I} \soc{G_i}\\
\left(\rds_{i \in I}G_i\right)_S&=\rds_{i \in I}(G_i)_S\\
\Sup{\rds_{i \in I}G_i}&=\bigcup_{i \in I} \Sup{G_i}
\end{align*}
\textbf{(restricted direct sums and socles respectively isotypical components “commute”)}. \\
\end{enumerate}
\end{proposition}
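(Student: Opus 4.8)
The plan is to prove the four parts in sequence, since each later part can lean on the earlier ones. Throughout, the central technical device will be the characterisations of $\sdri$ via condition (MI) and of $\sdrs$ via the generation condition $G=\gensbgop{\bigcup_i H_i}$, together with the fact (just established) that each $G_S$ and $\soc{G}$ is a normal $\Om$-subgroup of $G$.

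\textbf{Part 1.} First I would verify (\textbf{CC}) for the family $H=(G_{S_i})_{i \in I}$. Each $G_{S_i}$ is generated by simple normal $\Om$-subgroups isomorphic to $S_i$; for distinct $i,j$ the building blocks are non-isomorphic simple $\Om$-groups, so I would show that a simple normal $\Om$-subgroup of type $S_i$ and one of type $S_j$ commute elementwise. The key point is that if $A,B\sbgnop G$ are normal with $A\cap B=\bracc{1_G}$, then $[A,B]\subseteq A\cap B=\bracc{1_G}$, so $A\subseteq\centr{B}{G}$; and two non-isomorphic simple normal $\Om$-subgroups must intersect trivially (their intersection is normal in each, hence trivial or the whole, and equality would force an isomorphism). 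Extending commutation from generators to the generated subgroups $G_{S_i},G_{S_j}$ gives (\textbf{CC}). For the mutual-independence relation $\sdri(G,H)$, I would fix $i$ and consider $G_{S_i}\cap\gensbgop{\bigcup_{j\neq i}G_{S_j}}$; using (\textbf{CC}) the right-hand subgroup is a restricted-direct-sum-type join of isotypical components of types $S_j$ ($j\neq i$), hence by Proposition (1) isotypical with support contained in $\bracc{\cls{S_j}:j\neq i}$, while $G_{S_i}$ is isotypical of type $S_i$; the intersection is a direct summand of each (by (NS)) and its support lies in the intersection of the two supports, which is empty by the distinctness hypothesis, forcing triviality. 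This support-disjointness argument is the crux of Part 1.

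\textbf{Part 2.} Here I would specialise $I=\Sup{G}$ and $S_{\cls H}$ a representative of each class, noting $G_S=G_{\cls S}$ so the family is well-indexed. (\textbf{CC}) and $\sdri$ follow directly from Part 1 since distinct classes give non-isomorphic simple $\Om$-groups. It remains to prove surjectivity $\sdrs(\soc{G},K)$, i.e. $\soc{G}=\gensbgop{\bigcup_{S\in\Sup{G}}G_S}$. The inclusion $\supseteq$ is clear as each $G_S\sbgop\soc{G}$; for $\subseteq$, every generator of $\soc{G}$ is a simple normal $\Om$-subgroup $H$, whose class $\cls H\in\Sup{G}$, so $H\subseteq G_{\cls H}$ lies in the join. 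Combining $\sdri$ and $\sdrs$ yields $\sdrb$.

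\textbf{Part 3.} I would argue that a normal morphism $f$ carries simple normal $\Om$-subgroups to normal $\Om$-subgroups that are either trivial or simple (a quotient of a simple $\Om$-group by the kernel intersection is again simple or trivial), and that $f(H)\approx H/(H\cap\krn f)$ preserves the isomorphism type when nontrivial. Hence $f$ maps the generators of $\soc G$ into $\sz{G'}$ and those of $G_S$ into simple normal $\Om$-subgroups of type $S$ in $G'$, so their images land in $\soc{G'}$ respectively $G'_S$; normality of the image subgroup follows from the Lemma, since $f(\soc G)=f(\gensbgop{\bigcup\sz G})$ is generated by the normal images.

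\textbf{Part 4.} Writing $D=\rds_i G_i$ with canonical injections $\iota_i$, I would first establish the socle identity. For $\supseteq$, each $\iota_i$ has image a direct summand of $D$, hence is a normal morphism, so by Part 3 $\iota_i(\soc{G_i})\sbgnop\soc D$; since $\rds_i\soc{G_i}$ is generated by these images, the Lemma gives the inclusion. For $\subseteq$, I would show a simple normal $\Om$-subgroup of $D$ is concentrated in a single summand: its image under each projection is a normal subgroup of $G_i$, and simplicity plus the restricted (finite-support) nature of $D$ forces it into one $\im{\iota_i}$, whence into $\iota_i(\soc{G_i})$. The isotypical-component identity is proved by the identical argument, tracking the type $S$ throughout. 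The support formula then follows formally: $\cls S\in\Sup{D}\eqv D_S\neq\bracc{1_D}\eqv\rds_i(G_i)_S\neq\bracc{1_D}\eqv(\exists i)(G_i)_S\neq\bracc{1_{G_i}}\eqv(\exists i)\cls S\in\Sup{G_i}$, using the established equivalence $G_S\neq\bracc{1_G}\eqv\cls S\in\Sup G$. The main obstacle across Part 4 is the $\subseteq$ direction of the socle identity, namely showing every simple normal $\Om$-subgroup of the restricted direct sum sits inside a single summand's socle; this requires careful use of the finite-support condition and of the commutation relations among distinct summands.
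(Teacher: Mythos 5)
Parts (1)--(3) of your proposal are correct and follow essentially the paper's own route. In part (1) you reverse the paper's order of deduction --- you first obtain (\textbf{CC}) from pairwise trivial intersections of non-isomorphic simple normal $\Om$-subgroups and then prove (MI) by support-disjointness, whereas the paper derives both at once from the single statement $G_{S_i} \cap \gensbgop{\bigcup_{j \in I \setminus \bracc{i}} G_{S_j}}=\bracc{1_G}$ --- but the crux is the same: the intersection is normal in two semisimple $\Om$-groups with disjoint supports, hence is itself semisimple with empty support, hence trivial. Two small touch-ups there: the intersection is a direct summand of each by assertion (iii) characterising semisimplicity (not by (NS)), and it is its semisimplicity that licenses the step from $\Sup{F}=\varnothing$ to $F=\bracc{1_G}$; you leave this implicit. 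Parts (2) and (3) match the paper's proof in substance.

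Part (4), however, contains a genuine error. You propose to prove $\soc{\rds_{i \in I}G_i} \subseteq \rds_{i \in I}\soc{G_i}$ by showing that every simple normal $\Om$-subgroup of $D \eqdef \rds_{i \in I}G_i$ is concentrated in a single summand $\im{\iota_i}$. That claim is false, and the paper's closing remark is devoted to exactly this phenomenon: take $\Om=\varnothing$ and $G_1=G_2=A$ with $A$ abelian and simple (for instance $A=\drept{A}_3$); then $D=A \times A$ and the diagonal $\Delta_A$ is a simple normal $\Om$-subgroup of $D$ contained in neither summand. Neither simplicity nor the finite-support condition rules this out --- $\Delta_A$ has nontrivial projection onto both factors. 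The needed inclusion must instead be obtained as in the paper, via the canonical projections $p_i$: they are surjective, hence normal, so your own part (3) gives $p_i\brac{\soc{D}} \sbgnop \soc{G_i}$ for every $i \in I$, and therefore $\soc{D} \subseteq \bigcap_{i \in I}\inv{p_i}\brac{\soc{G_i}}=\rds_{i \in I}\soc{G_i}$; the identical argument handles $\brac{D}_S \subseteq \rds_{i \in I}\brac{G_i}_S$. Once the component identity is repaired in this way, your purely formal derivation of the support formula from the equivalence $G_S \neq \bracc{1_G} \eqv \cls{S} \in \Sup{G}$ does go through, and is in fact slightly tidier than the paper's direct argument, which re-runs the projection analysis on a simple $H \in \sz{D}$.
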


\newpage

\begin{proof} \hfill \\
\begin{enumerate}
    \item For ease of notation, let us define $E_i \eqdef \gensbgop{\displaystyle\bigcup_{j \in I \setminus \{i\}} H_j}$ for every $i \in I$ and remark at once that $E_i \sbgnop G$ by virtue of the lemma above, 
since all the $H_j, j \in I$ are normal \Om-subgroups. We argue that in order to prove our claim it will suffice to show that $H_i \cap E_i=\{1_G\}$ for any $i \in I$. Indeed, assuming for a moment that we have achieved this, for indices $i, j \in I, i \neq j$ we have by definition that $H_i \subseteq E_j$ and therefore that $[H_i, H_j] \subseteq H_i \cap H_j \subseteq E_j \cap H_j=\{1_G\}$, since both subgroups $H_i$ and $H_j$ are normal. This means that $H_j \subseteq \centr{H_i}{G}$, in other words family $H$ satisfies (\textbf{CC}). Furthermore, by virtue of condition (MI) above we automatically derive relation $\sdri(G, H)$. 

In order to prove the declared relation of interest, let us also introduce objects:
\begin{align*}
T_i& \eqdef \cls{S_i}\\
\rond{S}_i& \eqdef \{K \in \sz{G}|\ \cls{K}=T_i\}\\
\rond{T}_i &\eqdef \bigcup_{j \in I \setminus \{i\}} \rond{S}_j
\end{align*}
for each $i \in I$. We note that by hypothesis, $i, j \in I$ with $i \neq j$ entails $T_i \neq T_j$. The $H_i$ are semisimple (being isotypical) and by definition we have $H_i=\gensbgop{\displaystyle\bigcup \rond{S}_i}$ for each $i \in I$. By virtue of elementary properties of subgroup generation we see that:
\begin{align*}
E_i&=\gensbgop{\bigcup_{j \in I \setminus \bracc{i}}H_j}\\
&=\gensbgop{\bigcup_{j \in I\setminus \bracc{i}} \gensbgop{\bigcup \rond{S}_i}}\\
&=\gensbgop{\bigcup_{j \in I \setminus \bracc{i}} \brac{\bigcup \rond{S}_i}}\\
&=\gensbgop{\bigcup \rond{T}_i}.
\end{align*}
Since it is clear that by construction $\displaystyle\bigcup_{i \in I} \rond{T}_i \subseteq \sz{E_i}$,
we infer the important fact that $E_i$ is also semisimple for each $i \in I$ (it satisfies assertion (ii) characterising semisimplicity). To be more precise, an application of proposition (1) (with the trivial subgroup playing the role of $F$) yields the existence of a subset $\rond{R} \subseteq \rond{T}_i$ such that $E_i \approx \displaystyle\rds_{K \in \rond{R}}K \quad \brac{\omact{\gr}}$.
Similarly, for any $L \in \sz{E_i}$ on the one hand we have by the same token (proposition (1)) that $L \approx \displaystyle\rds_{K \in \rond{P}}K \hspace*{3pt} \brac{\omact{\gr}}$ for a certain subset $\rond{P} \subseteq \rond{R}$ and on the other that $L$ is simple, which entails $\card{\rond{P}}=1$. This means that $\cls{L}=\cls{K}$ for a certain $K \in \rond{R}$ and therefore entails the inclusion $\Sup{E_i} \subseteq \bracc{T_j}_{j \in I \setminus \bracc{i}}$.

Let us now fix an arbitrary index $i \in I$ and consider the intersection $F \eqdef E_i \cap H_i$. Since both $H_i$ and $E_i$ are normal in $G$, we infer that $F \sbgnop G$ and thus $F \sbgnop E_i, H_i$. Since $F$ is normal in the semisimple $\Om$-group $H_i$, it is itself semisimple. It follows that $F$ is generated by $\displaystyle\bigcup\sz{F}$ and it will thus suffice to show that $\Sup{F}=\varnothing$ in order to infer that $\sz{F}=\varnothing$ and hence $F=\bracc{1_G}$ is trivial. From assertion (iii) characterising semisimplicity we gather that $F \smd E_i, H_i$, from which we consequently deduce that $\Sup{F} \subseteq \Sup{E_i}, \Sup{H_i}$. Keeping in mind that $H_i$ is isotypical of type $S_i$ we deduce that:
\begin{align*}
\Sup{F}&\subseteq \Sup{E_i} \cap \Sup{H_i} \subseteq \bracc{T_j}_{j \in I \setminus \bracc{i}} \cap \bracc{T_i}=\varnothing,
\end{align*}
which means that $F$ is indeed trivial. 

\item In the general context of assertion (1) of the current proposition, making the choice of $\Sup{G}$ to play the role of index set $I$ and of the identity family $\Delta_{\Sup{G}}$ to play that of family $S$, it is clear that for $S, T \in \Sup{G}$ such that $S \neq T$ we have $\cls{S} \neq \cls{T}$, since all the members of $\Sup{G}$ are already isomorphism classes and thus $\cls{S}=S$ for any $S \in \Sup{G}$. Thus, the general condition in the hypothesis of assertion (1) above is satisfied and relation $\sdri(G, K)$ follows. Since $K$ is also a family of $\Om$-subgroups of the socle, it is immediate that relation $\sdri\brac{\soc{G}, K}$ also holds. Let us define:
\[\rond{S}_S \eqdef \{H \in \sz{G}|\ \cls{H}=S\}\]
for any $S \in \Sup{G}$. It is clear that $H \in \rond{S}_{\cls{H}}$ for any $H \in \sz{G}$ and thus:
\[\sz{G}=\bigcup_{S \in \Sup{G}} \rond{S}_S.\]
Since by definition we have that $G_S=\gensbgop{\displaystyle\bigcup \rond{S}_S}$, it follows easily that:
\begin{align*}
\soc{G}&=\gensbgop{\bigcup \sz{G}}\\
&=\gensbgop{\bigcup \brac{\bigcup_{S \in \Sup{G}}\rond{S}_S}}\\
&=\gensbgop{\bigcup_{S \in \Sup{G}}\brac{ \bigcup \rond{S}_S}}\\
&=\gensbgop{\bigcup_{S \in \Sup{G}} G_S},
\end{align*}
which means that relation $\sdrs\brac{\soc{G}, K}$ is also valid, hence so is $\sdrb\brac{\soc{G}, K}$. We record this relation in the form $\soc{G} \approx \displaystyle\rds_{S \in \Sup{G}} G_S \hspace*{3pt} (\omact{\gr})$.

\item Let $f \colon G \to G'$ be a normal morphism. For arbitrary $H \in \sz{G}$ we first remark that $f\brac{H} \sbgnop G'$ by normality and that since $H$ is simple $f$ is either injective or trivial. The two possible cases are thus that $f\brac{H} \approx H \hspace*{3pt} \brac{\omact{\gr}}$ is simple -- which leads to $f\brac{H} \in \sz{G'}$ -- or $f\brac{H}=\{1_{G'}\}$ is trivial. We therefore derive:
\begin{align*}
f\brac{\soc{G}}&=f \brac{\gensbgop{\bigcup \sz{G}}}\\
&=\gensbgop{\bigcup_{H \in \sz{G}} f\brac{H}}\\
&\subseteq \gensbgop{\bigcup \sz{G'} \cup \{1_{G'}\}}\\
&=\soc{G'}.
\end{align*}

As for isotypical components, the situation is similar: if $H \in \sz{G}$ such that $H \approx S \hspace*{3pt} (\omact{\gr})$, then either $f\brac{H} \approx H \approx S \hspace*{3pt} (\omact{\gr})$ or $f\brac{H}$ is trivial. In other words, by introducing:
\begin{align*}
\rond{S}_S &\eqdef \bracc{H \in \sz{G}|\ H \approx S \hspace*{3pt} (\omact{\gr})}\\
\rond{S}'_S &\eqdef \bracc{K \in \sz{G'}|\ K \approx S \hspace*{3pt} (\omact{\gr})}
\end{align*}
we have by definition that $G_S=\gensbgop{\displaystyle\bigcup \rond{S}_S}$ and $G'_S=\gensbgop{\displaystyle\bigcup \rond{S}'_S}$ and we have just seen that:
\[(\forall H)\brac{H \in \rond{S}_S \impl f\brac{H} \in \rond{S}'_S \vee f\brac{H}=\{1_{G'}\}},\]
so that an argument analogous to that concerning socles applies.

\item Let us abbreviate $G' \eqdef \displaystyle\rds_{i \in I} G_i$ and let $\iota_i \in \hm{\omact{\gr}}{G_i}{G'}, p_i \in \hm{\omact{\gr}}{G'}{G_i}$ denote the canonical injection respectively projection of index $i \in I$ associated to the restricted direct sum of family $G$. The canonical projections are obtained by restricting the corresponding projections of the direct product of family $G$ (of which the restricted direct sum is a normal $\Om$-subgroup) and they satisfy the relation $p_j \circ \iota_i=\delta_{ji}$, where:
\begin{align*}
\delta_{ji}&=\begin{cases}
\nil_{G_jG_i}, &i \neq j\\
\id_{G_i}, &i=j,
\end{cases}
\end{align*}
$\nil_{HK}$ being our notation for the null morphism from $K$ to $H$ in category $\omact{\gr}$ (which does possess null objects). Since the canonical projections are all surjective they are also normal and therefore by claim (3) above we have $p_i\brac{\soc{G'}} \subseteq \soc{G_i}$, from which we infer that:
\[\soc{G'} \subseteq \bigcap_{i \in i} \inv{p_i}\brac{\soc{G_i}}=\rds_{i \in I} \soc{G_i}.\]
Conversely, we know that the canonical injections are also normal so by once more referring to the preceding assertion (3) we have $\iota_i\brac{\soc{G_i}} \subseteq \soc{G'}$, which leads to:
\[\soc{G'} \supseteq \gensbgop{\bigcup_{i \in I} \iota_i\brac{\soc{G_i}}}=\rds_{i \in I} \soc{G_i}.\]
In view of the compatibility between normal morphisms and isotypical components, an analogous reasoning applies in the case of isotypical components.

As for the relation between the supports, let us remark that $\Sup{G_i} \subseteq \Sup{G'}$ for any $i \in I$, again by virtue of the normality of the canonical injections. Conversely, let $S \in \Sup{G'}$ be arbitrary and $H \in \sz{G'}$ such that $\cls{H}=S$. As $H$ is simple, we have that for every $i \in I$ one of the two relations $p_i\brac{H} \approx H \hspace*{3pt} (\omact{\gr})$ or $p_i\brac{H}=\{1_{G_i}\}$ must hold. Assuming that the latter relation were valid for each $i \in I$, we would derive that:
\[H \sbgop \bigcap_{i \in I} \inv{p_i}\brac{\{1_{G_i}\}}=\rds_{i \in I}\{1_{G_i}\}=\{1_{G'}\},\]
leading to the absurd conclusion that $H$ is both simple and trivial. Therefore, there must exist $i \in I$ such that $p_i\brac{H} \approx H \approx S \hspace*{3pt} (\omact{\gr})$. Since the canonical projections are surjective and therefore normal, we gather that $p_i\brac{H} \in \sz{G_i}$ and thus $S=\cls{p_i\brac{H}} \in \Sup{G_i}$. \qedhere \\
\end{enumerate}
\end{proof}

On the grounds of assertion (3) in the preceding proposition, we adopt the following syntactic convention: given arbitrary $\Om$-groups $G$ and $G'$, arbitrary \textit{simple} $\Om$-group $S$ and \textit{normal} morphism $f \in \hmn{G}{G'}$ we define \textbf{the $S$-component} of $f$ as $f_S \eqdef {}_{G'_S|}f_{|{\vphantom{G'}G}_S}$. If $G''$ is a third $\Om$-group and $g \in \hmn{G'}{G''}$ another normal morphism, it is immediate that $\brac{g \circ f}_S=g_S \circ f_S$. \\

\begin{theorem*} \label{corprop13} \hfill \\
\begin{enumerate}
\item Let $G$ be an \Om-group, $I$ an arbitrary set of indices, $H \in \Sbgop{G}^I$ a family of \Om-subgroups satisfying (\textbf{CC}) together with relation $\sdrb\brac{G, H}$ and $S$ an arbitrary simple \Om-group. Then the following simultaneously hold:
    \begin{enumerate}
    \item $\soc{H_i} \sbgop \soc{G}$ and $\brac{H_i}_S \sbgop G_S$ for every $i \in I$. 
    
    \item the families $\brac{\soc{H_i}}_{i \in I} \in \Sbgop{\soc{G}}^I$ respectively $\brac{\brac{H_i}_S}_{i \in I} \in \Sbgop{G_S}^I$ satisfy (\textbf{CC}).
    
    \item the relations $\sdrb\brac{\soc{G}, \brac{\soc{H_i}}_{i \in I}}$ and $\sdrb\brac{G_S, \brac{(H_i)_S}_{i \in I}}$ are valid.
    
    \item finally, the relation $\Sup{G}=\displaystyle\bigcup_{i \in I}\Sup{H_i}$ also holds.
    \end{enumerate}

\item For any \Om-group $G$ and any simple \Om-group $S$ it holds that $G_S=(\soc{G})_S$ and that $\Sup{G}=\Sup{\soc{G}}$ \textbf{(the socle encodes all the isotypical components and the support)}.

\item Let $G$ and $G'$ be two semisimple \Om-groups and let us introduce $\rond{S}\eqdef \Sup{G} \cap \Sup{G'}$. The following statements hold:
    \begin{enumerate}
    \item for any normal morphism $f \in \hmn{G}{G'}$ and any simple $\Om$-group $S$, the $S$-component $f_S \in \hmn{{\vphantom{\brac{G'}}G}_S}{G'_S}$ is also a normal morphism. 
    
    \item For any simple $\Om$-group $S$ we introduce the restriction map:
    \begin{align*}
    &\Gamma_S \colon \hmn{G}{G'} \to \hmn{{\vphantom{\brac{G'}}G}_S}{G'_S}\\
    &\Gamma_S(f)=f_S
    \end{align*}
    and consider the direct product in restricted sense $\Phi \eqdef \displaystyle\rprod_{S \in \rond{S}} \Gamma_S$. Then the map:
    \[\Phi \in \iso{\ens}{\hmn{G}{G'}}{\displaystyle\prod_{S \in \rond{S}}\hmn{{\vphantom{\brac{G'}}G}_S}{G'_S}}\]
    is a bijection. \\
    \end{enumerate}
\end{enumerate}
\end{theorem*}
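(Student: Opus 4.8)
The plan is to establish the three assertions in sequence, relying heavily on Proposition~\ref{prop13} which already contains most of the heavy machinery. For assertion~(1), I would exploit the hypothesis $\sdrb\brac{G, H}$, which means $\theta^H_G$ is an isomorphism, giving $G \approx \displaystyle\rds_{i \in I}H_i \hspace*{3pt} (\omact{\gr})$. Then I would simply transport the results of Proposition~\ref{prop13}(4) across this isomorphism. Concretely, since $\theta^H_G$ is an isomorphism it is in particular a normal morphism (being surjective) whose inverse is also normal, so by Proposition~\ref{prop13}(3) the socle and isotypical components are carried faithfully; the three displayed commutation formulas of~(4) then yield~(1a), and by the same transport the (\textbf{CC}) condition and the relation $\sdrb$ for the subfamilies $\brac{\soc{H_i}}_{i \in I}$ and $\brac{(H_i)_S}_{i \in I}$ follow from the corresponding facts about the canonical injections $\iota_i$ in a genuine restricted direct sum, giving~(1b),~(1c); finally the support formula of~(4) gives~(1d). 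The main care here is bookkeeping: checking that the $\Om$-subgroups $\soc{H_i}$ and $(H_i)_S$, which live inside $H_i \sbgop G$, are correctly identified with their images under $\theta^H_G$ inside $\soc{G}$ and $G_S$ respectively.

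For assertion~(2), the key observation is that $\soc{G}$ is itself semisimple with $\sz{\soc{G}}=\sz{G}$: every simple normal $\Om$-subgroup of $G$ lies in $\soc{G}$ and is a fortiori normal in it, while conversely any simple normal $\Om$-subgroup of $\soc{G}$ is normal in $G$ because $\soc{G} \sbgnop G$ and simplicity is intrinsic. Since the isotypical component $G_S$ and the support $\Sup{G}$ are defined purely in terms of $\sz{G}$, the equalities $G_S=(\soc{G})_S$ and $\Sup{G}=\Sup{\soc{G}}$ are then immediate from the coincidence of the two families of simple normal subgroups. I would spell out only the normality transfer $L \sbgnop \soc{G} \impl L \sbgnop G$, which uses that $\soc{G} \sbgnop G$ together with an argument that normality of a simple subgroup is preserved upwards in this situation.

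For assertion~(3a), I must show $f_S$ is normal, i.e.\ sends normal $\Om$-subgroups of $G_S$ to normal $\Om$-subgroups of $G'_S$. Since $G$ is semisimple, $G_S$ is a direct summand of $G$ (it is one of the isotypical components in the decomposition of Proposition~\ref{prop13}(2), applied with $\soc{G}=G$), so any $L \sbgnop G_S$ is normal in $G$ by property~(NS); then $f(L) \sbgnop G'$ by normality of $f$, and since $f(L) \subseteq f(G_S) \subseteq G'_S$ by Proposition~\ref{prop13}(3), and $G'_S \smd G'$, intersecting with the summand shows $f(L) \sbgnop G'_S$. The genuinely substantial part is~(3b): proving $\Phi$ is a bijection. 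Here the semisimplicity of both $G$ and $G'$ forces $G=\soc{G}=\displaystyle\rds_{S \in \Sup{G}}G_S$ and likewise for $G'$, by Proposition~\ref{prop13}(2). A normal morphism $f$ maps $G_S$ into $G'_S$ by Proposition~\ref{prop13}(3), and it must kill any component $G_S$ with $S \notin \Sup{G'}$ while no component with $S \notin \Sup{G}$ exists to begin with; thus $f$ is entirely determined by its restrictions $f_S$ for $S \in \rond{S}=\Sup{G}\cap\Sup{G'}$, giving injectivity of $\Phi$. For surjectivity I would take an arbitrary family $(g_S)_{S \in \rond{S}}$ of normal morphisms $g_S \colon G_S \to G'_S$, extend each to $G_S \to G'$ by postcomposing with the inclusion of the summand $G'_S$, and assemble them using the universal property of the restricted direct sum $\urds$; the (\textbf{CC}) hypothesis needed to form $\displaystyle\urds_{S \in \rond{S}}g_S$ holds because the images land in distinct isotypical components $G'_S$ which pairwise centralise one another by Proposition~\ref{prop13}(2).

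The hard part, and the step I would dwell on, is verifying that the assembled morphism $g \eqdef \displaystyle\urds_{S \in \rond{S}}(\iota'_S \circ g_S)$ is itself \emph{normal} (not merely a morphism) and that $\Gamma_S(g)=g_S$ recovers the original data, so that $\Phi$ is genuinely surjective onto the stated codomain of normal morphisms. Normality of $g$ should follow because its image is contained in a direct summand of $G'$ (namely $\displaystyle\rds_{S \in \rond{S}}G'_S$, a summand of $\soc{G'}=G'$), so $g$ factors through the inclusion of a direct summand and is therefore normal by the remark in \S{2} that any morphism whose image is a direct summand is normal. Confirming the round-trip identity $\Gamma_S \circ \Phi^{-1}=\id$ then reduces to the functoriality relation $(g\circ f)_S=g_S\circ f_S$ recorded just before the theorem and the fact that the $S$-component of the canonical injection $\iota'_S$ is essentially the identity on $G'_S$; I expect this compatibility check to be where the argument is most delicate and where the definition $f_S={}_{G'_S|}f_{|G_S}$ as a corestricted restriction must be handled with precision.
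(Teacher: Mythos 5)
Your arguments for assertion (1), for (3a) and for the injectivity half of (3b) do follow the paper's own route (transport along the isomorphism $\theta^H_G$ via claims (3) and (4) of Proposition \ref{prop13}, the (NS) argument, and killing the components indexed by $\Sup{G} \setminus \rond{S}$), and are essentially sound. Assertion (2), however, rests on a false claim. You assert $\sz{\soc{G}}=\sz{G}$, the nontrivial inclusion being justified by the ``normality transfer'' $L \sbgnop \soc{G} \impl L \sbgnop G$, on the grounds that $\soc{G} \sbgnop G$ and simplicity is intrinsic. Normality is not transitive, and no such transfer holds: this is precisely the phenomenon the paper's closing remark is devoted to. Taking $\Om=\varnothing$ and $G=\Sigma_3 \times \Sigma_3$, one has $\soc{G}=\drept{A}_3 \times \drept{A}_3$, and the diagonal $\Delta_{\drept{A}_3}$ is a simple normal subgroup of the (abelian) socle which is \emph{not} normal in $G$, because $\drept{A}_3$ is not central in $\Sigma_3$; hence $\sz{\soc{G}} \neq \sz{G}$ in general and your ``immediate'' derivation of $G_S=(\soc{G})_S$ and $\Sup{G}=\Sup{\soc{G}}$ collapses. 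The whole point of claim (2) is that the support and the isotypical components coincide \emph{even though} the collections of simple normal $\Om$-subgroups need not. The paper's proof instead applies claim (1) of the theorem to the decomposition $\sdrb\brac{\soc{G}, \brac{G_S}_{S \in \Sup{G}}}$ furnished by claim (2) of Proposition \ref{prop13} to obtain $\Sup{\soc{G}}=\Sup{G}$, and then proves $G_S=(\soc{G})_S$ by a sandwich argument: the easy inclusion $G_S \sbgop (\soc{G})_S$, combined with $\sdrs\brac{\soc{G}, \brac{G_S}_{S \in \Sup{G}}}$ and $\sdri\brac{\soc{G}, \brac{(\soc{G})_S}_{S \in \Sup{G}}}$, forces equality by property (SIE). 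An argument of this shape is unavoidable.

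There is also a genuine gap in your surjectivity argument for (3b). You justify normality of the assembled morphism by saying its image is \emph{contained in} a direct summand of $G'$; this condition is vacuous (every image is contained in the direct summand $G'$ itself) and cannot imply normality: for a simple nonabelian $S$, the diagonal morphism $S \to S \times S$ has image inside the semisimple target yet fails to be normal, since $\Delta_S$ is not normal in $S \times S$. The remark of \S{2} requires the image to \emph{be} a direct summand, and establishing this is exactly where the normality of the given components $g_S$ --- which your sketch never invokes --- must enter: $g_S\brac{G_S} \sbgnop G'_S$ by normality of $g_S$, hence $g_S\brac{G_S} \sbgnop G'$ by (NS) since $G'_S \smd G'$, hence the image of the assembled morphism, being generated by these, is normal in $G'$ by the lemma of subsection (2.1), and is therefore a direct summand by semisimplicity of $G'$; only then does the \S{2} remark apply. (The paper proves normality by running this same chain on $f\brac{F}$ for \emph{every} normal $\Om$-subgroup $F \sbgnop G$.) Finally, note that assembling only over $\rond{S}$ produces a morphism defined on $\rds_{S \in \rond{S}}G_S$, not on $G$: as in the paper, you must extend the family by the null morphisms indexed by $\Sup{G} \setminus \rond{S}$ before invoking the universal property of $\urds$, and then precompose with $\inv{\brac{\theta^H_G}}$, where $H \eqdef \brac{G_S}_{S \in \Sup{G}}$, in order to land on a map actually defined on $G$.
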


\begin{proof} \hfill \\
\begin{enumerate}
\item Let us abbreviate $\varphi \eqdef \theta^H_G$, an isomorphism by hypothesis. Since we have seen that $\im{\iota_i} \smd \displaystyle\rds_{i \in I}H_i$ and by construction $\varphi \circ \iota_i=\inc{H_i}{G}$, we infer that $H_i \smd G$. It follows that $\sz{H_i} \subseteq \sz{G}$ from which we easily obtain $\soc{H_i} \sbgop \soc{G}$ for any $i \in I$. Furthermore, for $i, j \in I, i \neq j$ we have $\soc{H_j} \subseteq H_j \subseteq \centr{H_i}{G} \subseteq \centr{\soc{H_i}}{G}$ and thus:
\begin{align*}
\soc{H_j} &\subseteq \soc{G} \cap \centr{\soc{H_i}}{G}\\
&=\centr{\soc{H_i}}{\soc{G}},
\end{align*}
which means that the family of \Om-subgroups $\brac{\soc{H_i}}_{i \in I} \in \Sbgop{\soc{G}}^I$ satisfies (\textbf{CC}).

We agree to abbreviate $G' \eqdef \soc{G}$ respectively $H' \eqdef (\soc{H_i})_{i \in I}$ and set up the following commutative diagram (family of diagrams):
\[\xymatrix@+3pc{
&H_i \ar[r]_{\iota_i} \ar@/^1cm/[rr]^{\inc{H_i}{G}} &\displaystyle\rds_{i \in I}H_i \ar[r]_{\theta^H_G} &G\\
&\soc{H_i} \ar@/_1cm/[rr]^{\inc{\soc{H_i}}{\soc{G}}} \ar[u]^{\inc{\soc{H_i}}{H_i}} \ar[r]^{\gamma_i} &\displaystyle\rds_{i \in I} \soc{H_i} \ar[r]^{\theta^{H'}_{G'}} \ar[u]^{\rds_{i \in I}\inc{H'_i}{H_i}} &\soc{G} \ar[u]_{\inc{\soc{G}}{G}}
}\]
for each $i \in I$, where $\iota_i, \gamma_i$ denote the canonical injections into the respective restricted direct sums. From this we infer in particular the commutativity of the right inner rectangle, in other words the relation:
\[\inc{\soc{G}}{G} \circ \theta^{H'}_{G'}=\varphi \circ \left(\rds_{i \in I}\inc{\soc{H_i}}{H_i}\right).\]
Since in particular we have that $\displaystyle\rds_{i \in I} \inc{\soc{H_i}}{H_i}=\inc{\rds_{i \in I} \soc{H_i}}{\rds_{i \in I}H_i}$, we can therefore infer that:
\[\theta^{H'}_{G'}={}_{\vphantom{\left(\theta^H_G\right)}\soc{G}|}{\vphantom{\brac{}}\varphi}_{|\rds_{i \in I}\soc{H_i}}.\]
Since in general the restrictions of injections are themselves injections, we gather that $\theta^{H'}_{G'}$ is also an injection and therefore $\sdri(\soc{G}, H')$ holds. Furthermore, $\theta^H_G$ is by hypothesis an isomorphism so it must bijectively map the source socle onto the target socle. We can thus apply claim (4) of proposition (2) to derive the relation:
\[\soc{G}=\varphi \brac{\soc{\rds_{i \in I}H_i}}=\varphi\brac{\rds_{i \in I}\soc{H_i}}=\im{\left(\theta^{H'}_{G'}\right)}.\]
This relation signifies that $\theta^{H'}_{G'}$ is also surjective, allowing us to conclude that relation $\sdrb(\soc{G}, H')$ holds.

Isomorphisms preserve supports, so by the same token of proposition (2), claim (4)
we infer in the particular case of isomorphism $\theta^H_G$ that:
\[\Sup{G}=\Sup{\rds_{i \in I}H_i}=\bigcup_{i \in I}\Sup{H_i}.\]

As for isotypical components, let us introduce $\rond{S} \eqdef \{K \in \sz{G}|\ K \approx S \hspace*{3pt} (\omact{\gr})\}$ and $\rond{R}_i \eqdef \{K \in \sz{H_i}|\ K \approx S \hspace*{3pt} (\omact{\gr})\}$ for each $i \in I$. Since $H_i \smd G$ we easily gather that $\rond{R}_i \subseteq \rond{S}$ and therefore that:
\[(H_i)_S=\gensbgop{\bigcup \rond{R}_i} \sbgop \gensbgop{\bigcup \rond{S}}=G_S.\]
The rest of the argument is completely analogous to the one presented for socles and relies on the analogous relation of proposition (2), claim (4) involving isotypical components. We omit the details.

\item We have seen in claim (2) of proposition (2) that relation $\sdrb(\soc{G}, (G_S)_{S \in \Sup{G}})$ holds. Therefore, by virtue of claim (1) of the current theorem we infer that:
\[\Sup{\soc{G}}=\bigcup_{S \in \Sup{G}} \Sup{G_S}=\bigcup_{S \in \Sup{G}} \{S\}=\Sup{G},\]
where we have taken into account the fact that by definition for any $S \in \Sup{G}$ the $S$-component $G_S$ is nontrivial and thus $\Sup{G_S}=\bracc{\cls{S}}=\bracc{S}$.

For simplicity we abbreviate $G' \eqdef \soc{G}$ and we introduce: 
\begin{align*}
\rond{S}_S &\eqdef \{H \in \sz{G}|\ H \approx S \hspace*{3pt} (\omact{\gr})\}\\
\rond{S}'_S &\eqdef \{H \in \sz{G'}|\ H \approx S \hspace*{3pt} (\omact{\gr})\}
\end{align*}
for every $S \in \Sup{G}$. It is clear by definition of socles that $\rond{S}_S \subseteq \rond{S}'_S$ and therefore that:
\[G_S=\gensbgop{\bigcup \rond{S}_S} \subseteq \gensbgop{\bigcup \rond{S}'_S}=G'_S.\]
By virtue of proposition (2), claim (2) we know that relation $\sdrs(G', (G_S)_{S \in \Sup{G}})$ holds on the one hand. Applying claim (1) of the same proposition to the $\Om$-group $G'$ and family $\brac{G'_S}_{S \in \Sup{G}}$, we derive that relation $\sdri(G', (G'_S)_{S \in \Sup{G}})$ also holds, on the other hand. On grounds of property (SIE) (\S{1}, paragraph (4)), it follows from these relations that ${\vphantom{\brac{G}}G}_S=G'_S$ for each $S \in \Sup{G}$. For a general simple $\Om$-group $T$, if $\cls{T} \in \Sup{G}$ the conclusion we just reached means that $G_T=G'_T$. If however $\cls{T} \notin \Sup{G}=\Sup{\soc{G}}$ it follows trivially that $G_T=G'_T=\bracc{1_G}$.

    \item 
        \begin{enumerate}    
        \item    Let us set $\rond{P}\eqdef \Sup{G}$ and $\rond{Q} \eqdef \Sup{G'}$. In claim (3) of proposition (2) we have seen that any normal morphism $f \in \hmn{G}{G'}$ restricts between $G_S$ and $G'_S$ for any simple \Om-group $S$. Since the $S$-component of $G$ is normal in a semisimple $\Om$-group, it is a direct summand. Thus, by virtue of property (NS), if $H \sbgnop G_S$ we gather $H \sbgnop G$ and by normality of $f$ we have $f_S(H)=f(H) \sbgnop G'$ and at the same time $f(H) \sbgop G'_S$. This entails $f_S(H) \sbgnop G'_S$ and means the $S$-component $f_S$ is itself normal.

        \item Let us argue that $\Phi$ is bijective. Indeed, assuming that $\Phi(f)=\Phi(g)$ for two normal morphisms $f, g \in \hmn{G}{G'}$, we gather that:
        \[\bigcup_{S \in \rond{S}}G_S \subseteq \drept{Eq}(f, g)\eqdef \inv{\brac{f \rprod g}}\brac{\Delta_{G'}}.\]
        Since by virtue of proposition (2), claim (2) we know that $G$ is generated by all of its isotypical components, in order to show that $f=g$ it will suffice to prove that for any $S \in \rond{P} \setminus \rond{S}$ we have $G_S \sbgop \drept{Eq}(f, g)$. However, for such $S \in \rond{P} \setminus \rond{S}=\rond{P} \setminus \rond{Q}$ we have $f\brac{G_S} \sbgop G'_S$ and since $S \notin \rond{Q}$ it follows that $G'_S=\{1_{G'}\}$, by which $G_S \sbgop \krn{f}$. This equally applies to morphism $g$ and means that $G_S \sbgop \krn{\brac{f \rprod g}} \sbgop \drept{Eq}(f, g)$. This argumentation establishes the injectivity of $\Phi$. 
    
        As to surjectivity, let $g \in \displaystyle\prod_{S \in \rond{S}} \hmn{{\vphantom{\brac{G'}}G}_S}{G'_S}$ be arbitrary and let us introduce the family of morphisms:
        \[f' \eqdef \brac{\inc{G'_S}{G'} \circ g_S}_{S \in \rond{S}} \cup \brac{\nil_{G' G_S}}_{S \in \rond{P} \setminus \rond{S}}.\]
        Since the family of isotypical components $(G'_S)_{\vphantom{G'_S}S \in \rond{Q}}$ satisfies (\textbf{CC}) cf. proposition (2), claim (1), it is clear by definition that the family of images $\brac{\im{f'_S}}_{S \in \rond{P}}$ satisfies (\textbf{CC}) as well, and we can thus consider morphism:
        \[h\eqdef \displaystyle\urds_{S \in \rond{P}}f'_S \in \hm{\omact{\gr}}{\displaystyle\rds_{S \in \rond{P}}G_S}{G'}.\]
        By virtue of claim (2) in the preceding proposition we have that $\varphi \eqdef \theta^H_G$ is an isomorphism, where we have abbreviated $H\eqdef \brac{G_S}_{S \in \rond{P}}$. For arbitrary $S \in \rond{P}$ let $\iota_S$ denote the canonical injection of index $S \in \rond{P}$ into the restricted direct sum of family $H$. Let us finally consider morphism $f \eqdef h \circ \inv{\varphi} \in \hm{\omact{\gr}}{G}{G'}$ and let us argue that it is normal and that $\Phi(f)=g$. 
    
        Let $F \sbgnop G$ be an arbitrary normal \Om-subgroup. We gather that $F$ is also semisimple and that $\Sup{F} \subseteq \Sup{G}$, since it follows that $F \smd G$ (assertion (iii) characterising semisimplicity). We thus have:
        \[\varphi\brac{\rds_{S \in \rond{P}}F_S}=\varphi\brac{\gensbgop{\bigcup_{S \in \rond{P}} \iota_S\brac{F_S}}}=\gensbgop{\displaystyle\bigcup_{S \in \rond{P}}F_S}=\gensbgop{\bigcup_{S \in \Sup{F}}F_S}=F\]
        or equivalently $\inv{\varphi}\brac{F}=\displaystyle\rds_{S \in \rond{P}}F_S$. Applying $h$ to this relation we have by definition:
        \[f\brac{F}=h\brac{\rds_{S \in \rond{P}}F_S}=\gensbgop{\bigcup_{S \in \rond{P}}f'_S\brac{F_S}}=\gensbgop{\bigcup_{S \in \rond{S}}g_S\brac{F_S}}.\]
        We recall that the morphisms $g_S$ are normal and infer that $g_S\brac{F_S} \sbgnop G'_S$ is normal in $G'_S$ for any $S \in \rond{S}$. Since $G'_S \smd G'$ (isotypical components are always normal, hence direct summands in the semisimple $\Om$-group $G'$), we infer by virtue of property (NS) that $g_S\brac{F_S} \sbgnop G'$ is thus normal in $G'$. By virtue of the lemma which concludes subsection (2.1) we finally gather that $f\brac{F} \sbgnop G'$, establishing the normality of $f$.
    
        Let us introduce the corestriction:
        \[\gamma_S\eqdef \inv{\brac{{}_{\im{\iota_S}|}\iota_S}} \in \iso{\omact{\gr}}{\im{\iota_S}}{G_S}.\]
        Bearing in mind that $\inv{\varphi}\brac{G_S}=\im{\iota_S}$ we have by definition the relations:
        \begin{align*}
        \id_{G_S}&={}_{G_S|}\brac{\inc{G_S}{G}}\\
        &={}_{G_S|}\brac{\varphi \circ \iota_S}\\
        &={}_{G_S|}\varphi_{|\im{\iota_S}} \circ {}_{\im{\iota_S}|}\iota_S,
        \end{align*}
        which lead to $\gamma_S={}_{G_S|}\varphi_{|\im{\iota_S}}$ and $f'_S=h \circ \iota_S=h_{|\im{\iota_S}} \circ {}_{\im{\iota_S}|}\iota_S$, further entailing $f'_S \circ \gamma_S=h_{|\im{\iota_S}}$. Since it is also clear that $\inv{\brac{{}_{G_S|}\varphi_{|\im{\iota_S}}}}={\vphantom{\inv{\varphi}}}_{\im{\iota_S}|}\brac{\inv{\varphi}}_{|G_S}$, it follows that $\gamma_S \circ {\vphantom{\inv{\varphi}}}_{\im{\iota_S}|}\brac{\inv{\varphi}}_{|G_S}=\id_{G_S}$ and therefore -- under the added assumption $S \in \rond{S}$ -- that:
        \begin{align*}
        f_{|G_S}&=\brac{h \circ \inv{\varphi}}_{|G_S}\\
        &={\vphantom{\brac{\inv{\varphi}}}h}_{|\im{\iota_S}} \circ {\vphantom{\brac{\inv{\varphi}}}}_{\im{\iota_S}|}\brac{\inv{\varphi}}_{|G_S}\\
        &=f'_S \circ \gamma_S \circ {\vphantom{\brac{\inv{\varphi}}}}_{\im{\iota_S}|}\brac{\inv{\varphi}}_{|G_S}\\
        &=f'_S \circ \id_{G_S}\\
        &=\inc{G'_S}{G'} \circ g_S,
        \end{align*}
        which means precisely that $g_S={}_{G'_S|}f_{|{\vphantom{G'_S}G}_S}$. This shows that $\Phi(f)=g$ and thus $\Phi$ is also surjective. \qedhere \\
        \end{enumerate}
        
\end{enumerate}
\end{proof}

Let us point out a phenomenon that can only be encountered in the case of noncommutative groups with operators. The content of claim (2) in the theorem is that arbitrary $\Om$-group $G$ only has the same support as its socle, not necessarily that they have the same collection of simple normal $\Om$-subgroups. More explicitly, (counter)examples can be given of instances where $H \sbgnop \soc{G}$ is simple without $H \sbgop G$ being normal in $G$ however. 

We indicate here one pattern for such an instance: consider $\Om=\varnothing$ (so that we are working in the ordinary category \gr) and $G$ a group such that $A \eqdef \soc{G}$ is \textit{abelian and simple} however non-central (in other words, $A \setminus \drept{Z}(G) \neq \varnothing$). Then by virtue of proposition (2), claim (4) we have $\soc{G \times G}=A \times A$, and it is easy to see that $\Delta_A \in \sz{\soc{G \times G}}$. However, in view of the equivalence relation:
\[\Delta_A \sbgn G \times G \eqv A \sbg \drept{Z}(G)\]
we deduce from the given hypothesis that $\Delta_A$ is not normal in the ambient group $G \times G$. 

We agree to write $\Sigma(A)$ for the symmetric group of arbitrary set $A$, $\drept{Alt}(A)$ for the corresponding alternating group and $\Sigma_n \eqdef \Sigma([1, n])$, $\drept{A}_n \eqdef \drept{Alt}([1, n])$ for the symmetric respectively alternating groups of degree $n$, in the particular case $A=[1, n] \eqdef \{k \in \nat| 1 \leqslant k \leqslant n\}$, where $n \in \nat$ is arbitrary. With this terminology, a concrete illustration of the above pattern is to be found in the case $G=\Sigma_3$, $A=\drept{A}_3$. \\


\end{document}